\documentclass[12pt]{article}
\usepackage{amsmath,amssymb,amsthm,amsfonts,amscd}
\usepackage{hyperref}
\usepackage[numeric]{amsrefs}
\usepackage{rotating} 
\input colordvi
\usepackage{graphicx}

\newtheorem{thm}[equation]{Theorem}

\newtheorem{lem}[equation]{Lemma}

\newcommand{\thmref}[1]{Theorem~\ref{#1}}

\newcommand{\secref}[1]{Section~\ref{#1}}

\numberwithin{equation}{section}

\renewcommand\a{\alpha}
\renewcommand\b{\beta}

\renewcommand\d{\delta}
\newcommand\e{\varepsilon}

\newcommand\D{\Delta}

\renewcommand\D{\Delta}

\newcommand\f{\frac}
\newcommand\smallf[2]{{\textstyle{\frac{#1}{#2}}}}

\newcommand{\Z}{{\mathbb{Z}}}
\newcommand{\R}{{\mathbb{R}}}

\newcommand{\C}{{\mathbb{C}}}

\newcommand{\Q}{{\mathbb{Q}}}

\renewcommand\Re{\text{Re~}}

\renewcommand\i{^{-1}}
\renewcommand\({\left(}
\renewcommand\){\right)}
\newcommand{\ttwo}[4]{
\(\begin{smallmatrix}{#1} & {#2}
\\ {#3} & {#4} \end{smallmatrix}\)}

\newcommand{\bx}{\hfill$\square$\vspace{.6cm}}
\newcommand{\sgn}{\operatorname{sgn}}

\newcommand\srel[2]{\begin{smallmatrix} {#1} \\ {#2} \end{smallmatrix}}

\newcommand{\gobble}[1]{}
  \newcommand{\rangeref}[2]{%
    \ref{#1}--\afterassignment\gobble\fam 0\ref{#2}%
  }

\makeatletter
\def\imod#1{\allowbreak\mkern5mu({\operator@font mod}\,#1)}
\makeatother

\begin{document}

\title{Non-unitarity outside the fundamental parallelepiped}

\author{Stephen D. Miller\thanks{Supported by NSF grant DMS-2101841.}\\
Yeshiva University
}

\maketitle

\begin{center}
{\it To Wilfried Schmid, with appreciation and admiration.}
\end{center}

\begin{abstract}
One of the challenges of the unitary dual problem is the daunting number of possible representations to consider.  This article describes an approach to narrowing the search space for minimal principal series representations, in terms of the ``fundamental parallelepiped'' (or ``FPP''):~the set of linear combinations of fundamental weights with coefficients in the interval $[0,1]$.
An earlier conjecture of the author, which was rooted in work of  Barbasch and then subsequently vastly generalized by  Vogan (and recently proven by  Davis and  Mason-Brown), asserts that the FPP houses all dominant infinitesimal characters for which
minimal principal series have a unitarizable quotient.

We  present a technique to prove this ``FPP inequality'' in specific examples, demonstrated here for the split real form $E_{8(8)}$, by introducing a limiting theory of intertwining operators as $\nu$ approaches $\infty$ in directions of fundamental weights.
  This limiting theory is inspired by Wilfried Schmid's work on variation of Hodge structure.
As an application, we show that the unitary set for a particular minimal principal series consists of the closure of a single open alcove.
\end{abstract}

\section{Introduction}\label{sec:intro}

The unitary dual problem is the last major unsolved problem in the representation theory of Lie groups.  One striking feature of this long-standing challenge, which asks to classify all the unitarizable representations of a real reductive Lie group, is the sheer size of the region of representations whose unitarity status is unknown.  The focus of this paper is an inequality which a large class of unitary representations of a split, semisimple real Lie group  $G$ must satisfy.  This class, which includes  nearly all examples studied in the spectral theory of automorphic forms (such as Maass forms), can be defined as those representations not cohomologically induced from  smaller groups, or equivalently as those containing a fine $K$-type in the sense of \cite{BGG}.  Each is a quotient
 of some dominant minimal principal series.  

We begin by explaining a special case of the inequality:~an inequality discovered by Dan Barbasch in studying the spherical unitary dual.  Let $\nu$ be the infinitesimal character of a unitary spherical representation of $G$, which without loss of generality can and will be assumed to be dominant and real.  (The classification of unitary representations for arbitrary $\nu$ reduces to that for real $\nu$.) The inequality is that
\begin{equation}\label{barbasch1}
  0 \ \le \ \langle \nu,\alpha^\vee \rangle \ \le \ 1
\end{equation}
for all simple coroots $\alpha^\vee$.  The value of 1 on the right-hand side is sharp, as it is achieved  by the trivial representation.
Equivalently, $\nu$ may be written as
\begin{equation}\label{barbasch2}
  \nu \ = \ \sum c_\varpi \varpi\,, \ \  \ \text{with} \ 0 \le c_\varpi \le 1 \ \ (\forall \varpi)\,,
\end{equation}
where
$\varpi$ ranges over all fundamental weights.  We call this region the ``fundamental parallelepiped'' (abbreviated as ``FPP''), and   (\ref{barbasch1})-(\ref{barbasch2}) the ``FPP inequality''.

Using techniques geared towards this specific situation, Barbasch   proved the FPP inequality for spherical unitary representations as part of his impressive and sharper program of calculating signatures on $K$-types \cite{BarbaschSph}, utilizing his important notion of {\em petite} $K$-type \cite{petite} that appears prominently in \secref{sec:FPPproof}.  Over the past five years there have been a number of approaches to analyzing whether or not the FPP inequality extends beyond spherical representations.  To summarize, the author conjectured that the FPP inequality holds for unitary quotients of dominant minimal principal series, and proved this conjecture for exceptional groups in~\thmref{mainthm} below. David Vogan then generalized this conjecture to a statement that all failures of the FPP inequality for unitary representations are accounted for by cohomological induction, from unitary representations that do satisfy the FPP inequality.  Vogan's conjecture was later proven by Davis and Mason-Brown \cite{DM-B}.

\begin{thm}(FPP Inequality)\label{mainthm}
 Suppose $\pi$ is a unitary quotient of a minimal principal series representation of a split real form of an exceptional group.  Assume that $\pi$'s infinitesimal character $\nu$ is real and dominant.   Then $\nu$ satisfies the equivalent conditions (\ref{barbasch1})-(\ref{barbasch2}).
\end{thm}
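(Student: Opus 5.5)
The plan is to argue by contradiction using signatures of the long intertwining operator on petite $K$-types. Let $\pi$ be a unitary quotient of the dominant minimal principal series $I(\delta,\nu)$, where $\delta$ is a character of $M$ and $\nu$ is real and dominant. Suppose some coefficient satisfies $c_\varpi>1$.

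Unitarity of $\pi$ forces the Hermitian form induced by the long intertwining operator $A(w_0,\delta,\nu)$ to be semidefinite on every $K$-type that occurs in $\pi$. The fine $K$-type $\mu_\delta$ attached to $\delta$ occurs in the quotient. So the first step is to restrict to petite $K$-types in Barbasch's sense. On these, $A(w_0,\delta,\nu)$ factors as a product of rank-one operators that act through explicit rational functions of $\langle\nu,\alpha^\vee\rangle$ on $W$-representations of $M$-fixed (or $\delta$-isotypic) vectors. This reduces the problem to finite-dimensional linear algebra. For each $\delta$ we work inside the $W^\delta$-module attached to the petite $K$-types containing $\delta$.

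The main step is to show the following: for every dominant $\nu$ outside the FPP, there is a petite $K$-type $\mu$ on which the normalized operator has signature opposite to its signature on $\mu_\delta$. I would approach this through a limiting analysis along rays $\nu=\nu_0+t\varpi$ with $t\to\infty$. The factors $\frac{1-\langle\nu,\alpha^\vee\rangle}{1+\langle\nu,\alpha^\vee\rangle}$ tend to $-1$ for roots $\alpha$ with $\langle\varpi,\alpha^\vee\rangle>0$, and stay fixed for the remaining roots. So the limiting operator is a product over a Levi subsystem, twisted by the action of a sign-type involution. This is analogous to Schmid's nilpotent orbit/$\mathrm{SL}_2$-orbit limits. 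For such a limit we can read off the signature combinatorially, in terms of the character of $w_0$ and $w_0^{L}$ acting on the $W$-representation.

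One then checks that once a coordinate $c_\varpi$ exceeds 1, no reducibility hyperplane $\langle\nu,\alpha^\vee\rangle=1$ (for $\alpha$ involving $\varpi$) is crossed further along the ray. Hence the signature at $\nu$ equals the limiting signature, and the limiting operator is indefinite. Making this rigorous requires the family to stay nondegenerate on the relevant petite types for $c_\varpi>1$. The reason is that each factor $\langle\nu,\alpha^\vee\rangle\pm1$ with $\langle\varpi,\alpha^\vee\rangle\ge1$ keeps a fixed sign there, so the determinant does not vanish. Varying the remaining coordinates in $[0,\infty)$ only crosses walls of roots orthogonal to $\varpi$, and these are handled by induction on the Levi.

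The bookkeeping is a finite check over the exceptional split groups, $\delta$ ranging over the characters of $M\cong(\Z/2)^r$ up to $W$, and $\varpi$ over the fundamental weights. In each case we must exhibit one petite $K$-type (or a $W$-representation realized on one) whose limiting signature is wrong. I expect the main obstacle to be $E_{8(8)}$. There the number of $W$-orbits of $\delta$ and the size of the $W$-representations make the existence of a suitable petite type non-obvious. Also, for some non-spherical $\delta$ the petite types available may be too few to detect indefiniteness. For these cases I would first try the $W$-representations attached to the reflection representation and its exterior powers tensored with the sign-type character of $\delta$, and compute traces of the limiting involutions by computer. If these fail, I would use inductive arguments along Levi subgroups where $\delta$ restricts to a smaller case already settled.
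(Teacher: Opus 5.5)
Your skeleton is the paper's. You restrict to petite $K$-types and freeze every coordinate of $\nu$ except $z=\langle\nu,\beta^\vee\rangle$. You note that every coroot with positive $\beta^\vee$-coefficient pairs with $\nu$ to more than $1$ for all $z>1$, so the signature cannot change. You then pass to $z\to\infty$, where the operator tends to $\epsilon\,\Lambda(w')\circ R(w_{\text{long},\beta},\nu_0,\delta)$, with $w_{\text{long}}=w'w_{\text{long},\beta}$ and $\nu_0=\nu-z\varpi_\beta$. Getting this limit directly from the explicit petite factors $1$, $\pm1$, $\frac{1-s}{1+s}$, instead of from the general Limiting Theorem, is fine.

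The gap is that you stop exactly where the content of the theorem begins. You never exhibit, for each $\delta$ and each $\beta$, a petite $K$-type on which this limit is indefinite, and you even doubt that one exists. For $E_{8(8)}$ that existence \emph{is} the proof, and the paper supplies it concretely:
\begin{itemize}
\item For trivial $\delta$, it uses the vector of $\mathfrak p$ corresponding to $\varpi_\beta$. The Levi factor acts trivially on it, and $w_{\text{long}}=-1$ gives the sign.
\item For the orbit of $\delta_{\varpi_8}$, it uses the multiplicity-one $\mathfrak p$. There the operator is the scalar $\frac{1-\langle\nu,w\alpha_{\text{high}}^\vee\rangle}{1+\langle\nu,w\alpha_{\text{high}}^\vee\rangle}$, which tends to $-1$ for 715 of the 960 pairs.
\item It uses the $1920$-dimensional vector-spinor, whose limit has a $-1$ eigenvector for every nontrivial $\delta$ and every $\beta$ except the single pair $(\alpha_2,\delta_{\varpi_8})$. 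That pair is covered by $\mathfrak p$ with $w=1$.
\end{itemize}

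Your proposed way of doing the check also misreads the limit. The limit is not an involution built from $w_0$ and $w_0^{L}$ whose signature is a trace on exterior powers of the reflection representation. The factor $R(w_{\text{long},\beta},\nu_0,\delta)$ survives, and it depends on the remaining coordinates $\nu_0$, which are arbitrary. Induction on the Levi cannot repair this. You need indefiniteness for every $\nu_0$, including $\nu_0$ in the Levi's own FPP, where that factor contributes nothing negative. What is required is a vector on which the Levi factor acts by a $\nu_0$-independent scalar and on which $\Lambda(w')$ then produces $-1$. Locating such vectors is the case-by-case work.

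Relatedly, your nondegeneracy claim is false as stated. If $\langle\nu_0,\alpha^\vee\rangle=1$ for a root of $L_\beta$, the determinant vanishes identically along the ray. The correct argument uses the factorization. The operator $R(w',w_{\text{long},\beta}\nu,w_{\text{long},\beta}\delta)$ is invertible for $z\in(1,\infty)$ and has invertible limit $\epsilon\Lambda(w')$. Meanwhile $R(w_{\text{long},\beta},\nu_0,\delta)$ is independent of $z$. So the rank, and hence the signature, is constant, and the other coordinates never need to move.

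Finally, for groups like $E_{7(7)}$ with several lowest $K$-types, you would also have to explain why a negative vector in $V_{\nu,\delta}$ survives in the particular quotient $\pi$.
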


The importance of this theorem is that it considerably reduces the search space for unitary representations, as compared to existing bounds such as the Dirac inequality.  For $E_{8(8)}$ the reduction in volume is by a factor of about 861 alone, not to mention the even greater benefit of eliminating some of the most difficult ranges of infinitesimal characters.   As evidence for its effectiveness, the FPP bound has been used  in joint work with Adams, van Leeuwen, and Vogan to now classify the unitarity of Langlands quotients from minimal principal series, for all split exceptional groups.

Because of space considerations and the fact that the recent results of   \cite{DM-B} now subsume ours, we will explain the proof of Theorem~\ref{mainthm}  only in its most difficult case of $G=E_{8(8)}$, the split real form of $E_8$.  The other exceptional groups are handled fairly similarly and the technique is surely applicable in much broader generality, though it requires a case-by-case analysis, whereas \cite{DM-B} does not.  It should be mentioned, however,  that groups such as $E_{7(7)}$, whose principal series contain multiple lowest $K$-types, and groups such as $E_{6(6)}$, whose Hermitian condition requires a nontrivial fixed-point condition,  contain some  subtleties not present for $E_{8(8)}$.

The methods of this paper and of \cite{DM-B} are both influenced, in totally different ways, by Wilfried Schmid's work on Hodge theory.  Indeed, the main tool here is the introduction of a theory of limiting structures on Hermitian forms as $\nu\rightarrow\infty$ in the direction of a fundamental weight.  This was stylistically inspired by Wilfried Schmid's seminal work on variation of Hodge structure \cite{SchmidHodge1,SchmidHodge2}, though the  aspects that survive in the presentation here are admittedly superficial.  In more detail, one of Schmid's most impressive results  shows there is a well-defined limit of Hodge structures on a family of degenerating annuli \cite{SchmidHodge1}.  Analogously, we show that Hermitian forms on minimal principal series representations have certain well-defined normalized limits, which can be computed in terms of Hermitian forms for  Levi components.  In terms of the notation and choice of Weyl group representatives given in \secref{sec:background}, our most general result can be stated as follows:

\begin{thm}(Limiting Theorem)\label{thm:limit}
For $\b\in\Sigma$ let
$L_\beta$ denote the standard Levi subgroup associated to $\beta$, and let $w_{\text{long},\b}$ denote the long element of the Weyl group of $L_\beta$.
 Fix $\nu$ such that $\langle \nu,\beta^\vee\rangle=0$.  Let $f_\bullet$ be a smooth flat section for the principal series (\ref{princseries}).
Then the limiting value of the long intertwining operator on $f_\bullet$  in the limiting direction of $\varpi_\b$ is given by
\begin{equation}\label{limittheorem}
  \lim_{s\rightarrow\infty} R(w_{\text{long}},\nu+s\varpi_\beta,\delta)f_{\nu+s\varpi_\beta}
 \ \ = \ \
\e\,\Lambda(w_\text{long}(w_{\text{long},\beta})\i)(R(w_{\text{long},\b},\nu,\d)f_\nu),
\end{equation}
where $\Lambda$ denotes the left translation operator on functions in $C^\infty(K)$ and $\e$ is an explicitly computable constant that depends only on $\beta$ and $\delta$.
\end{thm}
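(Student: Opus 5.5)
The plan is to factor the long intertwining operator through the Levi. Write $w_{\text{long}} = w'\,w_{\text{long},\b}$ with $w' = w_{\text{long}}(w_{\text{long},\b})\i$ and $\ell(w_{\text{long}})=\ell(w')+\ell(w_{\text{long},\b})$. By the cocycle property of the normalized operators $R(w,\nu,\d)$ recalled in \secref{sec:background}, this gives
\begin{equation*}
R(w_{\text{long}},\nu+s\varpi_\b,\d)\;=\;R\big(w',\,w_{\text{long},\b}(\nu+s\varpi_\b),\,w_{\text{long},\b}\d\big)\,R(w_{\text{long},\b},\nu+s\varpi_\b,\d).
\end{equation*}
The two factors are then treated separately.

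For the right factor, $w_{\text{long},\b}$ is a product of simple reflections $s_\a$ with $\a\ne\b$. Every root $\g$ it sends negative lies in the root system of $L_\b$, so $\langle\varpi_\b,\g^\vee\rangle=0$. Each rank-one factor in a reduced expression, computed on $K$-types via the $SL_2$ formulas, therefore depends only on $\langle\nu+s\varpi_\b,\g^\vee\rangle=\langle\nu,\g^\vee\rangle$. This factor is consequently independent of $s$, and it equals $R(w_{\text{long},\b},\nu,\d)$ acting on the restriction to $K$. Since $f_\bullet$ is a flat section, $f_{\nu+s\varpi_\b}|_K=f_\nu|_K$, and the right factor yields exactly $R(w_{\text{long},\b},\nu,\d)f_\nu$. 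Note also that $w_{\text{long},\b}\varpi_\b=\varpi_\b$, so the left factor is evaluated at parameter $w_{\text{long},\b}\nu+s\varpi_\b$.

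For the left factor, every root $\g$ inverted by $w'$, after transport to the relevant parameter, satisfies $\langle\varpi_\b,\g^\vee\rangle>0$. Thus each rank-one parameter $\langle \cdot,\g^\vee\rangle$ tends to $+\infty$ as $s\to\infty$. The normalized rank-one operator $R(s_\a,\cdot,\d)$ is $\Lambda(\sigma_\a)$, left translation by a representative of $s_\a$, composed with a scalar on each $K$-isotypic piece for the $SO(2)$ of that $SL_2$. These scalars are ratios of Gamma functions such as $\Gamma(\frac{x+1-k}{2})/\Gamma(\frac{x+1+k}{2})$, suitably normalized. As $x\to\infty$ with the $SO(2)$-weight $k$ fixed, the normalized ratio tends to a sign $\pm1$ (or $i^{k}$-type constant) that depends only on $k \bmod 2$, i.e.\ on $\d$ restricted to the corresponding element of order two.

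Hence each factor tends, $K$-type by $K$-type, to a constant times $\Lambda(\sigma_\a)$. The product tends to $\e\,\Lambda(\text{representative of } w')$, where $\e$ is the product of the signs. Because $\d$ is transported along the word, $\e$ depends only on $\b$ and $\d$, and it is independent of the chosen reduced word by compatibility with the fixed representatives of \secref{sec:background}.

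The main obstacle is uniformity. The limit holds $K$-type by $K$-type, but the Gamma-ratio convergence is not uniform in $k$, while $R(w_{\text{long},\b},\nu,\d)f_\nu$ is a smooth function with infinitely many $K$-types. I would handle this by taking the limit in the weak/$K$-finite sense first, then upgrading using the polynomial bound $|\Gamma(\frac{x+1-k}{2})/\Gamma(\frac{x+1+k}{2})|\ll (1+|k|)^{N}$, uniform for large $x$. This bound combines with rapid decay of the $K$-type components of smooth vectors, and dominated convergence then gives convergence in $C^\infty(K)$. A secondary difficulty is tracking the sign conventions for the representatives so that $\e$ is as claimed.
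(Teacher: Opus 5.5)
Your proposal is correct and follows the paper's proof. You split $w_{\text{long}}=w'w_{\text{long},\beta}$ via the Kostant factorization, note that the $w_{\text{long},\beta}$ factors see only coroots of $L_\beta$ and so are independent of $s$, and send each simple factor of $w'$ (all of whose coroots pair positively with $\varpi_\beta$) to $\e_j\Lambda(w_{\beta_j})$, where $\e_j\in\{1,i\}$ is determined by the transported character rather than being a sign. The only differences are that you get the rank-one limit from the explicit $K$-type scalars instead of Lemma~\ref{lem:asymptconcentration}, and that you add a uniformity argument for non-$K$-finite sections which the paper omits; for that argument the right input is $|r(s,n)|\le 1$ for $s>0$, since the Gamma-ratio bound you wrote has poles as written and so is not uniform.
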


\noindent Thus the limiting value of the Hermitian form is computable in terms of smaller intertwining operators.  The constant $\e$ is 1 when $\delta$ is trivial, that is, for spherical principal series.  \\

  All the objects in \thmref{thm:limit} can be explicitly computed for small $K$-types.
This precise control of Hermitian form signatures has other applications, such as showing non-unitary in large subregions inside the FPP itself.  As an example, we have the following classification of the unitary set for the principal series representation $V_{\nu,\d}$ from (\ref{princseries}) induced from a particular order-two character $\d$ (see~\secref{sec:background} for notation).  Recall that the dominant Weyl chamber is partitioned into open simplices (``alcoves'') after removing the hyperplanes $\{\nu : \langle \nu,\a^\vee\rangle =n\}$, for all $\a\in\Delta_+$ and $n\in \mathbb{Z}_{\ge 0}$, that contain all possible points of reducibility for $V_{\nu,\d}$.  The  unitarity of a quotient of $V_{\nu,\d}$ is therefore constant on facets of the resulting simplicial decomposition of the dominant Weyl chamber.  The alcove touching the origin is the ``fundamental alcove'', and the full FPP itself is tiled by 696,729,600 affine translates of its closure.  In general, the unitary set is a complicated, mixed-dimensional simplicial complex formed by a finite set of facets.  The following result shows there is a special case in which it is as simple as possible.

\begin{thm}\label{thm:easyalcove}
{\bf A simple part of the unitary dual:}
Consider the  order-two character $\d_{\varpi_8}$ of the Borel subgroup of $E_{8(8)}$, which is uniquely characterized by being trivial on its intersection with the semisimple part of the standard Levi subgroup of type $E_7$. Then the quotient of a principal series $V_{\nu,\d_{\varpi_8}}$ with dominant infinitesimal character $\nu$ is unitarizable if and only if
\begin{equation}\label{fundalcove}
  \langle \nu,\alpha_{\text{high}}^\vee\rangle \ \ \le \ \ 1\,,
\end{equation}
that is, $\nu$ lies in the closure of the fundamental alcove.
\end{thm}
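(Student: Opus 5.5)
The proof has two directions. One shows unitarity on the closed fundamental alcove. The other shows non-unitarity everywhere else in the dominant chamber.

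\emph{Unitarity on the closed alcove.} I would use the standard deformation argument. At $\nu=0$ the principal series $V_{0,\d_{\varpi_8}}$ is unitarily induced, hence unitary, and its lowest $K$-type quotient is unitary. On the open fundamental alcove no reducibility hyperplane $\langle \nu,\a^\vee\rangle=n$ is crossed, since $\langle\nu,\a^\vee\rangle\le\langle\nu,\a_{\text{high}}^\vee\rangle<1$ for every positive root $\a$. Hence $V_{\nu,\d}$ is irreducible there and its Hermitian form, normalized to be positive on the fine lowest $K$-type, stays nondegenerate along the segment from $0$. Its signature is therefore constant, and so the form is positive definite. Passing to the boundary facets, the quotient is a limit of positive semidefinite forms, hence unitary. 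So the closed alcove is contained in the unitary set.

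\emph{Non-unitarity elsewhere.} First, Theorem~\ref{mainthm} reduces the problem to $\nu$ in the FPP with $\langle\nu,\a_{\text{high}}^\vee\rangle>1$. The unitarity status is constant on facets, so it suffices to exhibit, for each facet of the FPP outside the closed alcove, a $K$-type on which the normalized form is indefinite.

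The natural first move is to find a small (petite) $K$-type $\mu$, adapted to $\d_{\varpi_8}$, whose intertwining operator $R(w_{\text{long}},\nu,\d)|_\mu$ has a determinant with a simple factor vanishing exactly along $\langle\nu,\a_{\text{high}}^\vee\rangle=1$. Crossing that wall flips the sign of one eigenvalue, giving non-unitarity just beyond the wall in the region $1<\langle\nu,\a_{\text{high}}^\vee\rangle$.

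For such petite $K$-types the operator factors into rank-one pieces, which are explicit $2\times2$ or small matrices in $\langle\nu,\a^\vee\rangle$. The eigenvalues can then be computed on the whole region $\{\langle\nu,\a_{\text{high}}^\vee\rangle>1\}\cap\mathrm{FPP}$. The hope is that a handful of $K$-types detect indefiniteness on every facet. The choice of $\d_{\varpi_8}$, trivial on the $E_7$ part, should make the relevant $K$-types behave like spherical ones for $E_7$ twisted by a sign. The sign is what kills the larger complementary series, such as the trivial representation at $\rho$, that would exist in the spherical case.

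\emph{Main obstacle.} The hardest step is covering all facets between the alcove and the FPP boundary. I would use Theorem~\ref{thm:limit} to control the form as $\nu\to\infty$ along $\varpi_\b$, which reduces the signatures at far regions to Levi computations for $L_\b$. These Levi computations can then be treated inductively, using the known answers for smaller groups. The signature is constant on regions bounded by the zero sets of the determinant factors. One therefore propagates indefiniteness from the limit regions and from the first wall inward. At each region one checks that the sign pattern on the chosen $K$-types is indefinite.

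Where this propagation fails to cover some interior facet, I would fall back to a direct computer evaluation of the petite $K$-type operators at a sample point of that facet.
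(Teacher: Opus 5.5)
Your unitarity half is the paper's argument, so that part is fine: irreducibility at $\nu=0$ and on the open alcove, constant signature along a segment from $0$, then passage to the closure.

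The non-unitarity half, however, is not a proof. It is a search strategy whose success is left to ``hope'' and to unspecified computer checks, and the tools you name do not reach the region that matters. Theorem~\ref{thm:limit} combined with petiteness only controls dominant $\nu$ with $\langle\nu,\beta^\vee\rangle>1$, i.e.\ the complement of the FPP, which you have already discarded. Inside the FPP, a petite $K$-type of higher multiplicity can change signature across any wall $\langle\nu,\gamma^\vee\rangle=1$. The FPP contains roughly $7\times 10^8$ alcoves plus their lower-dimensional facets, so neither propagation from one wall nor sample-point evaluation is a workable argument. The reduction to the FPP is also circular within this paper. The vector-spinor fails exactly for $\delta=\delta_{\varpi_8}$, $\beta=\alpha_2$, and that case of Theorem~\ref{mainthm} is deduced from Theorem~\ref{thm:easyalcove} itself. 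You would therefore have to import the Davis--Mason-Brown theorem instead.

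The missing idea is that your ``natural first move'' already finishes the job if you take $\mu=\frak p$. This $K$-type occurs in $V_{\nu,\delta_{\varpi_8}}$ with multiplicity one, so the whole operator on it is a single scalar.

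\begin{itemize}
\item Since $\varpi_8=\alpha_{\text{high}}$ for $E_8$, the character $\delta_{\varpi_8}$ is $m\mapsto m^{\alpha_{\text{high}}}$.
\item $M$ acts trivially on $\frak h\subset\frak p$ and by the pairwise distinct characters $m\mapsto m^\gamma$ on the lines $\C(X_\gamma+X_{-\gamma})$, $\gamma\in\Delta_+$. So $u'$ is $X_{\alpha_{\text{high}}}+X_{-\alpha_{\text{high}}}$.
\item Push $u'$ through a reduced word as in (\ref{comprulelong}). Before step $j$ it is a multiple of $X_\gamma+X_{-\gamma}$ with $\gamma=\pm w_{\beta_{j+1}}\cdots w_{\beta_\ell}\alpha_{\text{high}}$.
\item If $\gamma\neq\pm\beta_j$, only $Z_{\beta_j}$-eigenvalues $0,\pm i$ occur. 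By (\ref{smallras}) the factor then just carries the vector to a $\nu$-independent unimodular multiple of $X_{w_{\beta_j}\gamma}+X_{-w_{\beta_j}\gamma}$.
\item Exactly once, namely when $w_{\beta_\ell}\cdots w_{\beta_{j+1}}\beta_j^\vee=\alpha_{\text{high}}^\vee$, one has $\gamma=\pm\beta_j$. The vector then lies in the $\pm2i$ eigenspaces and is multiplied by $\frac{1-s}{1+s}$ with $s=\langle\nu,\alpha_{\text{high}}^\vee\rangle$.
\end{itemize}

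Fixing the overall constant by positivity at $\nu=0$, the form on $\frak p$ is $\frac{1-\langle\nu,\alpha_{\text{high}}^\vee\rangle}{1+\langle\nu,\alpha_{\text{high}}^\vee\rangle}$. This is finite, nonzero and negative for \emph{every} dominant $\nu$ with $\langle\nu,\alpha_{\text{high}}^\vee\rangle>1$, inside or outside the FPP. So $\frak p$ survives in the quotient with negative form, while the form is positive on the lowest $K$-type (the fine $K$-type $\frak k$). Non-unitarity follows in one stroke, with no limits, no FPP reduction, and no facet-by-facet analysis. This is exactly the paper's proof.
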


After background and notation is given in \secref{sec:background}, the necessary properties of intertwining operators are reviewed in \secref{sec:intertwiners}.  \secref{sec:sub:calculation} in particular discusses an explicit method to compute intertwining operators on fixed $K$-types.
The proof of the Limiting Theorem~\ref{thm:limit} is then given in \secref{sec:limittheorem}.  The FPP inequality of Theorem~\ref{mainthm} for $E_{8(8)}$ is proved in \secref{sec:FPPproof}, as is \thmref{thm:easyalcove}.  Finally, \secref{sec:spherical} uses the Limiting Theorem to give a new proof of Barbasch's theorem that the FPP inequalities hold in the spherical case.\\

{\bf Acknowledgements:} I would like to thank Jeff Adams, Dan Barbasch, Jack Buttcane,  Ezra Getzler, Michael Harris, Joseph Hundley, Peter Sarnak, Wilfried Schmid,  Freydoon Shahidi, and Akshay Venkatesh for very helpful conversations. I am indebted to David Vogan for background and advice on many important issues, and for a careful reading of the manuscript.  Zhuohui Zhang kindly supplied me with  code \cite{Zhangrepo} to compute explicit realizations of low-dimensional Lie algebra representations, without which the results of this paper would not have been possible.
I am also grateful to Lucas Mason-Brown for sharing the results from \cite{DM-B}, and for explaining what connection they may have to the arguments here.  I wish to also thank Rutgers University for access to computational resources used to double-check the calculations of \secref{sec:FPPproof}.

I particularly would like to extend my thanks to the editors for putting together this volume in honor of Wilfried Schmid, whose unique combination of patience and energy resulted in the introduction of many deep, long-gestating ideas across mathematics -- and across over half a century.  His intuition of where to dig, along with the strength to do lengthy and strenuous digging, changed the topography of the land he walked on.

\section{Lie-theoretic background and notation}\label{sec:background}

The results of our paper are for the split real group $G=E_{8(8)}$, though the general strategy applies with straightforward modifications to all split exceptional groups (and, we anticipate, to any fixed, split reductive group).  For that reason we work with the general language of a split semisimple real group, realized as the $\R$-points of a connected, reductive linear algebraic group which is defined and split over $\Q$, with an irreducible root system $\Delta$.  Let
\begin{equation}\label{Sigmadef}
  \Sigma \ \ = \ \ \{\alpha_1,\ldots,\alpha_r\}\,, \  \ \ \ r \ \ = \ \ \operatorname{rank}(G)\,,
\end{equation}
 denote a fixed choice of simple roots, and $\Delta_+$ the positive roots for this choice of $\Sigma$.  Then $\Delta=\Delta_+\sqcup \Delta_{-}$, with $\Delta_{-}=-\Delta_+$ denoting the negative roots.  There is a unique highest root $\alpha_{\text{high}}\in \D_+$ with largest sum of coefficients in its expansion in $\Sigma$.
Dual to the simple coroots $\{\alpha^\vee|\alpha\in\Sigma\}$ are the fundamental weights $\{\varpi_\alpha|\a\in\Sigma\}$.  Let $\Lambda_{\text{wt}}$ denote the weight lattice, which is the $\Z$-span of the fundamental weights.  Let $\rho$ denote the half-sum of positive roots, which is equal to the sum of the fundamental weights.

We carry over root vectors $X_{\a}$, $X_{-\a}$, and $H_\a=[X_\a,X_{-\a}]$ for each embedded root-${\frak sl}_2$ from the Chevalley structure of ${\frak g}=Lie(G)$.  Denote by $\Phi_\a:SL(2,\R)\rightarrow G$ the homomorphism whose differential sends the standard Lie algebra basis elements $\ttwo 0100$, $\ttwo 0010$, and $\ttwo{1}00{-1}$ of ${\frak sl}_2(\R)$ to $X_\a$, $X_{-\a}$, and $H_\a\in {\frak g}$, respectively.

The Iwasawa decomposition
allows us to factor $G=NAK$, where:~$N$ is the unique maximal unipotent subgroup of $G$ containing the one-parameter subgroups generated by each $X_\a$, $\a\in\Sigma$; $A\cong(\R_{>0})^r$ is the   abelian subgroup equal to the product of the one-parameter subgroups generated by each $H_\a$, $\a\in\Sigma$; and $K$ is the maximal compact subgroup containing  the one-parameter subgroups generated by each Lie algebra element $Z_\alpha:=X_{\alpha}-X_{-\alpha}$, $\alpha \in \Sigma$.

In particular, $A$ is the exponential of the Cartan subalgebra $\mathfrak{h}$ spanned by the $H_\a$; it lies in the algebraic maximal torus $T=MA$, where $M\cong (\Z/2\Z)^{r}$ is the centralizer of $A$ in $K$.  $T$ is a maximal abelian subgroup of $G$.
Let $\frak h^*_{\C}$ denote the complex span of $\Delta$ and $\langle \cdot,\cdot \rangle$  the pairing between $\mathfrak{h}_\C^*$ and $\mathfrak{h}$.  Elements $\nu$ of $\frak h^*_{\C}$ can be given coordinates through the pairings $\langle \nu,\alpha^\vee\rangle$, $\a\in\Sigma$, which are their coefficients when expressed as a linear combination of fundamental weights.
    Characters $\chi$ of $T$ have the form $\chi(me^H)=\delta(m)e^{\langle \nu,H\rangle}$, where  $m\in M$, $H\in \mathfrak{h}$, $\delta$ is a character of $M$, and $\nu\in\mathfrak{h}_\C^*$.  For brevity we will at times use the notation $a^\nu$ instead of $e^{\langle \nu,H\rangle}$ for elements of the form $a=\exp(H)$, $H\in \frak h$.

A character $\chi=\delta \times (\cdot)^{\nu+\rho}$ as above extends to $B=TN$ by defining it to be trivial on $N$.  Such characters induce to the minimal principal series representations
\begin{equation}\label{princseries}
  V_{\nu,\delta} \ \ = \ \ \operatorname{Ind}_B^G\,\(\d \times (\cdot)^{\nu+\rho}\).
\end{equation}
  For example, if $\delta$ is trivial this induction results in the spherical principal series.

Every $w\in W$ has a factorization as a reduced product $w_{\beta_1}w_{\beta_2}\cdots w_{\beta_\ell}$ of simple Weyl reflections $w_\beta$ about some $\beta\in\Delta_+$.  The unique value of $\ell$  occurring in such a reduced factorization of $w$ is the length $\ell(w)$, and is strictly maximized by the long Weyl group element $w_{\text{long}}$.  The long element has order two and is the composition of negation with a diagram automorphism.  In particular, when the Dynkin diagram has no nontrivial automorphisms (as is the case for all exceptional root systems of type other than $E_6$), $w_{\text{long}}$ acts as scalar multiplication by $-1$ on ${\frak h}^*_{\C}$.

In our setting, we may take representatives for $W$ in $G$ as follows.  First, the representative for the simple Weyl reflection corresponding to $\a\in\Sigma$ will be taken to be $w_\a:=\Phi_\a(\ttwo 0{1}{-1}0)=\exp(\pi Z_\a/2)$.  To define representatives of arbitrary elements $w\in W$, we factor $w$ as a product of minimal length in the simple Weyl reflections, and take the corresponding products of these representatives of simple reflections.  According to \cite[Lemma 83b]{Steinberg}, this procedure produces a well-defined representative of $w$ in the normalizer of $T$.  Henceforth we will identify $w\in W$ with this representative in $G$, noting that the representative of a product need not be the product of its representatives.

The conjugate $w_{\text{long}}Nw_{\text{long}}^{-1}$ is equal to the ``opposite'' unipotent radical $N_{-}$, which  contains the one-parameter subgroups generated by root vectors for $\Delta_{-}$.  Each $\beta\in\Sigma$ defines a standard maximal parabolic subgroup $P_\beta=L_\beta N_\beta$, where $L_\beta$ contains the one-parameter subgroups generated by root vectors in  the integral span of $\Sigma -\{\beta\}$, and $N_\beta\subset N$. Let $w_{\text{long},\beta} \in W$ denote the long Weyl element of $L_\beta$.  Using Kostant coset representatives, $w_{\text{long}}$ has a factorization  of the form
\begin{equation}\label{Kostant}
  w_\text{long} \ \ = \ \ w'w_{\text{long},\beta}, \qquad \ell(w_\text{long})=\ell(w')+\ell(w_{\text{long},\beta})
\end{equation}
for some $w'\in W$
(see \cite[Prop.~1.10(c)]{Humphreys}).  In particular, concatenating reduced words for $w'$ and $w_{\text{long},\beta}$ results in a reduced word for  $w_\text{long}$, and this identity in fact holds for their representatives in $G$ chosen as above.

The crux of our argument involves varying the continuous parameter $\nu$.  As such it is  useful to consider the restrictions of elements of $V_{\nu,\delta}$ to $K$, which completely determine them via the Iwasawa decomposition.  In this way one can consider ``flat sections'' $f_\nu$ (in the number-theory parlance), which have a common restriction to $K$ as $\nu$ varies.  Sometimes we denote a flat section by $f_\bullet$ when the infinitesimal character $\nu$ is not necessary to specify.  Thus, flat sections are elements of
\begin{equation}\label{twoinds}
  \operatorname{Ind}_{B\cap K}^K\delta \ \ \cong  \ \  V_{\nu,\d}\,,
\end{equation}
 which is a fixed space of functions on $K$ to which we will apply operations with varying $\nu$.

\section{Limiting structures on intertwining operators}\label{sec:intertwiners}

This section contains the core concepts of our argument, which we first summarize as follows.
Unitary quotients $\pi$ of a dominant minimal principal series representation $V_{\nu,\d}$ (\ref{princseries}) of $G=E_{8(8)}$  must preserve a positive-definite Hermitian form, which is given in terms of the long intertwining operator in (\ref{hermitianform}) \cite[\S4]{Vogan}. We will work with Langlands' normalization of intertwining operators $R(w_{\text{long}},\nu,\delta)$ \cite[Appendix II]{langlands}.  This normalization is designed so that it acts as simply as possible on the lowest $K$-types of principal series, up to some choices of signs.  We will establish Theorem~\ref{mainthm} by showing that if $\langle \nu,\alpha^\vee\rangle > 1$ for some simple coroot $\alpha^\vee$, then $\pi$ is not unitary.  This will in turn be shown by demonstrating that $R(w_{\text{long}},\nu,\delta)$ is indefinite on some $K$-type of $\pi$.  It then suffices to prove the indefiniteness for   some $K$-type of the full principal series representation $V_{\nu,\d}$, since the indefiniteness must persist to $\pi$, as
$V_{\nu,\d}$ has a unique lowest $K$-type.  Finally, this last statement is shown using the theory of limiting intertwining operators.

\subsection{Intertwining integrals}

We record some basic properties of the standard intertwining operators mapping $V_{\nu,\delta}$ to $V_{w\nu,w\delta}$, $w\in W$, referring to \cite[\S9]{Hundley-Miller} as a recent reference aligned with the presentation here.   The standard intertwining operator
\begin{equation}\label{mspaces}
M(w,\nu,\d)\colon V_{\nu,\d} \ \ \rightarrow \ \  V_{w\nu,w\delta}
\end{equation}
 is defined as an integral that makes sense for $w\in N(T)$, the normalizer of $T$:
\begin{equation}\label{standardint}
  [M(w,\nu,\delta)f_\nu](g) \ \ = \ \
 \int_{N\cap wN_{-}w^{-1}}f_\nu(w^{-1}ug)\, du\,, \ \ \ \ f_\nu \, \in\, V_{\nu,\d}^\infty\,,
\end{equation}
where  --- at least to initially ensure absolute convergence of the integral --- $\nu$ satisfies the constraint that $\langle \nu,\alpha^\vee\rangle$ has positive real part for any $\alpha\in\Delta_+\cap w^{-1}\Delta_{-}$.  The integral in fact meromorphically continues to all $\nu\in \frak h^*_\C$.  Definition (\ref{standardint}) of course makes sense for arbitrary $w\in W$, using the identification with elements of $N(T)\cap G$ described in \secref{sec:background}.  Note that had we taken a different representative of $w$ inside $N(T)$, the value of (\ref{standardint}) would only change by a scalar coming from the inducing character.

Langlands introduced the scalar normalizing factors
\begin{equation}\label{normalizingfactors}
  \sigma(w,\nu,\delta) \ \ = \ \ \prod_{\srel{\alpha\in\Delta_+}{w\alpha\in\Delta_{-}}} \sigma(\langle \nu,\alpha^\vee\rangle,\delta\circ \alpha^\vee)\,,
\end{equation}
where
\begin{equation}\label{cfunction}
  \sigma(s,\xi) \ \ = \ \ \left\{
                        \begin{array}{ll}
                         \frac{1}{\sqrt{\pi}} \frac{\Gamma\(\smallf{s+1}{2}\)}{\Gamma\(\smallf{s}{2}\)}, & \xi\hbox{ trivial;} \\
\\
                         \frac{i}{\sqrt{\pi}} \frac{\Gamma\(\smallf{s+2}{2}\)}{\Gamma\(\smallf{s+1}{2}\)}, & \xi\hbox{ nontrivial.}
                        \end{array}
\right.
\end{equation}
The normalized intertwining operators
\begin{equation}\label{normalizedintertwiners}
  R(w,\nu,\delta) \ \ := \ \ \sigma(w,\nu,\delta) M(w,\nu,\delta)
\end{equation}
enjoy several important properties, for example:
\begin{enumerate}
  \item they are holomorphic in the range $\{\nu\in \frak h_\C^* | \Re\! \langle \nu, \alpha^\vee\rangle > -1,\forall \alpha\in\Delta_+\cap w^{-1}\Delta_{-}\}$;
  \item they are nonzero  scalar multiples of $M(w,\nu,\d)$ for $\{\nu\in \frak h_\C^* | \Re\! \langle \nu, \alpha^\vee\rangle > 0,\forall \alpha\in\Delta_+\cap w^{-1}\Delta_{-}\}$;
  \item they obey the composition rule
\begin{equation}\label{multrule}
  R(w_1w_2,\nu,\delta) \  =  \ R(w_1,w_2\nu,w_2\delta)\circ R(w_2,\nu,\delta)
\end{equation}
for any pair of  elements $w_1,w_2$ in the normalizer of $T$; and
  \item if $\delta$ is trivial, they act trivially on the spherical $K$-type (in particular, on flat sections whose restriction to $K$ is constant).
\end{enumerate}
Note again that there is a subtlety in applying (\ref{multrule}) when $w_1$ and $w_2$ are taken to be the fixed representatives of elements of $W$ chosen in \secref{sec:background}:~unless $\ell(w_1w_2)=\ell(w_1)+\ell(w_2)$, the product of the representatives is not guaranteed to be the representative of the product.
Nevertheless, if $w$ is written as a word of minimal length in simple Weyl reflections, the multiplication rule (\ref{multrule}) reduces the computation of $R(w,\nu,\d)$ to the case that $w$ is a simple Weyl reflection.  In turn, the evaluation of (\ref{standardint}) for simple Weyl reflections reduces to an $SL(2)$ calculation that will be carried out in  detail in the next two subsections.

Recall that for groups such as  $G=E_{8(8)}$ that have no nontrivial Dynkin diagram automorphisms, $w_\text{long}\nu=-\nu$.
Up to scalar multiples, the invariant Hermitian form on pairs $f,f'\in V_{\nu,\delta}$ is conveniently given through the normalized intertwining operators $R(w_{\text{long}},\nu,\delta):V_{\nu,\delta}\rightarrow V_{-\nu,\delta}$ by the formula
\begin{equation}\label{hermitianform}
  H(f,f') \ \ = \ \ \langle f,\overline{R(w_{\text{long}},\nu,\delta)f'}\rangle,
\end{equation}
where $\langle v,v'\rangle$ is the pairing on $V_{\nu,\delta}\times V_{-\nu,\delta}$ given by integrating the product of $v$ and $v'$ over $B\backslash G$ (the shift by $\rho$ in (\ref{princseries}) makes this integral well-defined).

\subsection{Review of intertwiners for $SL(2,\R)$}\label{sec:sub:SL2intertwiners}

In this subsection we temporarily switch our focus to  $SL(2,\R)$, for which there is a single nontrivial Weyl element $w=w_{\text{long}}$ and (\ref{standardint}) becomes
\begin{equation}\label{standardintSL2}
  [M(w,\nu,\delta)f_{\nu}](g) \ \ = \ \ \int_{x\in\R} f_\nu\(\ttwo{0}{-1}{1}{x} g\)\,dx\,.
\end{equation}
The $K$-types in $V_{\nu,\delta}$ are constrained by the parity of $\delta$:~the $K$-isotypic subspaces are spanned by flat sections $f_\nu$ whose restrictions to $K=SO(2)$ are characters of the form
\begin{equation}\label{Kisotypic}
 \ttwo{d}{-c}{c}{d} \ \mapsto \ (d+ic)^n,
\end{equation}
with $n$ even if $\delta$ is trivial and $n$ odd if $\delta$ is nontrivial.  Since intertwining operators preserve these 1-dimensional $K$-isotypic spaces, the integral (\ref{standardintSL2})   must be a scalar multiple of $f_\nu$, and to compute this scalar it suffices to specialize $g=e$.  Using the Iwasawa matrix decomposition
\begin{equation}\label{SL2matrix}
  \ttwo{0}{-1}{1}{x}  \ \ = \ \ \ttwo{(x^2+1)^{-1/2}}{\star}{0}{(x^2+1)^{1/2}} \ttwo{\frac{x}{\sqrt{x^2+1}}}{-\frac{1}{\sqrt{x^2+1}}}{
\frac{1}{\sqrt{x^2+1}}}{\frac{x}{\sqrt{x^2+1}}},
\end{equation}
 the integral (\ref{standardintSL2}) becomes
\begin{equation}\label{standardintSL2explicated}
\aligned
  [M(w,\nu,\delta)f_{\nu}](e) \ \ & = \ \ \int_{\R} f_\nu\(\ttwo{(x^2+1)^{-1/2}}{\star}{0}{(x^2+1)^{1/2}} \ttwo{\frac{x}{\sqrt{x^2+1}}}{-\frac{1}{\sqrt{x^2+1}}}{
\frac{1}{\sqrt{x^2+1}}}{\frac{x}{\sqrt{x^2+1}}}\)\,dx\\
& = \ \  \int_{\R}(x^2+1)^{-(\langle \nu,\a^\vee\rangle+1)/2}\, f_\nu\( \ttwo{\frac{x}{\sqrt{x^2+1}}}{-\frac{1}{\sqrt{x^2+1}}}{
\frac{1}{\sqrt{x^2+1}}}{\frac{x}{\sqrt{x^2+1}}}\) \,dx\\& = \ \  \int_{\R}(x^2+1)^{-(\langle \nu,\a^\vee\rangle+1)/2}\, \(\f{x+i}{|x+i|}\)^n \,dx\\
& = \ \ \frac{r(\langle \nu,\a^\vee\rangle,n)}{ \sigma(\langle \nu,\a^\vee\rangle,\delta)}\,,
\endaligned
\end{equation}
where
\begin{equation}\label{ras}
r(s,n) \ \ := \ \
\begin{cases}
 \displaystyle\prod_{\substack{k=1 \\ k\text{ odd}}}^{|n|-1}
\smallf{k-s}{k+s},
& \text{if $n$ is even $\Longleftrightarrow$ $\delta$ is trivial},\\[1.2em]
 \sgn(-n)\displaystyle\prod_{\substack{k=2 \\ k\text{ even}}}^{|n|-1}
\smallf{k - s}{k+s},
& \text{if $n$ is odd $\Longleftrightarrow$  $\delta$ is nontrivial}.
\end{cases}
\end{equation}
Thus the normalized intertwining operator $R(w,\nu,\delta)$ (\ref{normalizedintertwiners}) acts on the $K$-isotypic space for (\ref{Kisotypic}) by the scalar $r(\langle \nu,\a^\vee\rangle,n)$.
Finally, note that since the diagonal entries of the upper triangular matrix in (\ref{SL2matrix}) are positive, the calculation in (\ref{standardintSL2explicated}) uses nothing directly about $\delta$:~its influence is only felt in determining the parity of $n$ in (\ref{Kisotypic}).

\subsection{Calculation of simple intertwiners}\label{sec:sub:calculation}

Having given details in the case of $SL(2,\R)$, we now return to the general setting  and
resume the calculation of the $R(w_{\beta},\cdot,\cdot)$, $\beta\in\Sigma$, which are the factors of $R(w_{\text{long}},\cdot,\cdot)$ through the multiplication rule (\ref{multrule}) (written out in  (\ref{comprulelong})).  Intertwining operators preserve $K$-types, and hence the action can be computed  one $K$-type at a time.  This calculation is well-known, and similar approaches are taken in \cite{Vogan,Zhang}.

First we describe the flat sections for an arbitrary $K$-type $(\tau,U)$, as elements of $C^\infty(K)$.  Let $(\tau',U')$ be the  representation dual to $(\tau,U)$, with pairing $\langle \cdot,\cdot\rangle:U'\times U\rightarrow \C$.  By the Peter-Weyl theorem, the $\tau$-isotypic subspace of $C^\infty(K)$ is isomorphic to $\tau'\otimes \tau$, realized concretely for pure tensors by the matrix coefficients
\begin{equation}\label{matrixcoefficients}
  F_{u',u}(k) \ \ = \ \ \langle u',\tau(k)u\rangle, \qquad u'\in U', \,u\in U.
\end{equation}
 Flat sections $f_\nu$ in (\ref{twoinds})  transform on the left according to the character $\delta$; in terms of (\ref{matrixcoefficients}), this condition is that
\begin{equation}\label{taum}
\tau'(m) u' \ \ = \ \ \delta(m)\,,\qquad \forall m\,\in\,M
\end{equation}
(since both $m$ and $\delta$ have order at most 2,  inverses are not needed to describe this equivariance).

With our choice of Weyl group representatives,
definition (\ref{standardint}) specializes for simple intertwining operators to
\begin{equation}\label{simpleintforbeta1}
  [M(w_\b,\nu,\d)f_\nu](g) \ \  = \ \ \int_{\R} f_\nu\(\Phi_\b(\ttwo{0}{-1}1x)g\)dx\,.
\end{equation}
As both $f_\nu$ and $[M(w_\b,\nu,\d)f_\nu](g)$ transform by (different) inducing characters of $B$ on the left, it suffices to take $g=k\in K$ by the Iwasawa decomposition.
The calculation of \secref{sec:sub:SL2intertwiners} now  goes through largely intact in this more general setting through the homomorphism $\Phi_\beta$.   The upper triangular matrix on the right-hand side of (\ref{SL2matrix}) again gives a power  through $\Phi_\beta$:
\begin{equation}\label{simpleintforbeta2}
  [M(w_\b,\nu,\d)f_\nu](k) \ = \ \int_{\R} (x^2+1)^{-(\langle \nu,\beta^\vee\rangle+1)/2} f_\nu\(\!\Phi_\b\!\(\!\ttwo{\frac{x}{\sqrt{x^2+1}}}{-\frac{1}{\sqrt{x^2+1}}}{
\frac{1}{\sqrt{x^2+1}}}{\frac{x}{\sqrt{x^2+1}}} \! \) k \! \)dx.
\end{equation}
In order to simplify further, we diagonalize $U'$ by the Lie algebra element $Z_\beta=X_\b-X_{-\b}$.  (Note that the differential action of $d\tau'(Z_\beta)$ need not preserve the condition   (\ref{taum}), nor should it:~(\ref{standardint}) maps $V_{\nu,\delta}$ to $V_{w\nu,w\delta}$.)
  In particular we assume, as we may through linear combinations, that $f_\nu$'s restriction to $K$ is of the form $F_{u',u}$ from (\ref{matrixcoefficients}), with $d\tau'(Z_\b)u'=inu'$,  $n$  necessarily an integer.  Consequently, $f_\nu$ transforms on the left under $\Phi_\b(SO(2))$ by the character (\ref{Kisotypic}), and (\ref{simpleintforbeta2}) becomes
\begin{equation}\label{simpleintforbeta3}
\aligned
  [M(w_\b,\nu,\d)f_\nu](k) \ \  & = \ \ \int_{\R} (x^2+1)^{-(\langle \nu,\beta^\vee\rangle+1)/2} \(\f{x+i}{|x+i|}\)^n F_{u',u}(k)\, dx \\
 & = \ \ \frac{r(\langle \nu,\b^\vee\rangle,n)}{ \sigma(\langle \nu,\b^\vee\rangle,\delta)}\,F_{u',u}(k)\,,
\endaligned
\end{equation}
as in (\ref{standardintSL2explicated}).
That is, $R(w_\beta,\nu,\delta)$ acts by the scalar $r(\langle \nu,\b^\vee\rangle,n)$ on the flat section for the matrix coefficient  $F_{u',u}$ on $K$, whenever $d\tau(Z_\b)u'=inu'$.

Of course, the equivariance of the intertwining operators allows us to  replace $k$ with $e$ in (\ref{simpleintforbeta3}), without losing any generality.  This is equivalent to replacing $u$ with a translate;  the factor in formula (\ref{simpleintforbeta3}) depends on $u'$ but not $u$.  Accordingly, taking $u$ to be a fixed, nonzero vector allows us to cut down the matrix size of the Hermitian form's restrictions to $K$-types, which can therefore be completely described by using the $M$-equivariant $u'$ in (\ref{matrixcoefficients})-(\ref{taum}).

This computation of the simple intertwining operators $R(w_\beta,\nu,\delta)$ shows they are diagonal on an eigenbasis for $d\tau'(Z_\beta)$.
The general intertwining operator is now a composition of these simple intertwining operators, though with different values of $\nu$ and  potentially different eigenbases.  Note that once again the character $\delta$ plays no direct role in the computation, aside from influencing which integers occur as  eigenvalues of $i Z_\beta$.  

\subsection{Large-$s$ limit of simple intertwiners}\label{sub:sec:largelimit}

In order to later establish the requisite asymptotics of the Hermitian form restricted to certain $K$-types,   we compute those of the simple normalized intertwining operators $R(w_\b,\nu,\d)$ in the large-$\langle \nu,\b^\vee\rangle$ limit.
First,
\begin{equation}\label{rasymptotics}
  \lim_{s\rightarrow\infty}r(s,n) \ = \ \left\{
                                          \begin{array}{ll}
                                            1, & n\equiv 0\text{~or~}3\pmod 4 \\
                                            -1, & n\equiv 1\text{~or~}2\pmod 4.
                                          \end{array}
                                        \right.
\end{equation}
This follows from (\ref{ras}), and can also be seen from the localization of the integral in (\ref{standardintSL2explicated}) around $x=0$ by applying Lemma~\ref{lem:asymptconcentration} below.

Next,  applying Stirling's formula to (\ref{cfunction}) shows the $s\rightarrow\infty$ asymptotics
\begin{equation}\label{sterling}
  \sigma(s,\xi) \  \sim  \ \left\{
                        \begin{array}{ll}
                        \sqrt{\frac{s}{2\pi}}, & \xi\hbox{ trivial,} \\
\\
                        i\sqrt{\frac{s}{2\pi}}, & \xi\hbox{ nontrivial,}
                        \end{array}
\right.
\end{equation}
which then gives full asymptotic control of the quotient in (\ref{standardintSL2explicated}) as $\langle \nu,\a^\vee\rangle\rightarrow\infty$.  We shall use the following Lemma to deduce  more general asymptotics of the simple intertwining integral operators from (\ref{simpleintforbeta1}):

\begin{lem}\label{lem:asymptconcentration}
Let $\phi(x)$ denote a fixed, continuously differentiable,  bounded function on the real line.  Then
\begin{equation}\label{asymptlittleo1}
  \int_{\R}(x^2+1)^{-(s+1)/2}\,\phi(x)\,dx  \ - \  \frac{\phi(0)}{\sigma(s,\xi_\text{triv})}  \ \ = \ \ o\(\frac{1}{\sigma(s,\xi_\text{triv})}\)
\end{equation}
as $s\rightarrow\infty$ along the positive reals.
\end{lem}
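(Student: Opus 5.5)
The plan is to normalize by the total mass of the kernel. By (\ref{standardintSL2explicated}) with $n=0$ and trivial $\delta$ (so $r(s,0)=1$), we have the exact identity
\begin{equation*}
  \int_{\R}(x^2+1)^{-(s+1)/2}\,dx \ \ = \ \ \frac{1}{\sigma(s,\xi_\text{triv})}\,,
\end{equation*}
valid for $s>0$. Subtracting $\phi(0)$ times this identity reduces the claim to showing
\begin{equation*}
  \sigma(s,\xi_\text{triv})\int_{\R}(x^2+1)^{-(s+1)/2}\,\bigl(\phi(x)-\phi(0)\bigr)\,dx \ \ \longrightarrow \ \ 0
\end{equation*}
as $s\rightarrow\infty$. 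By (\ref{sterling}), $\sigma(s,\xi_\text{triv})\sim\sqrt{s/(2\pi)}$, so it suffices to show that the integral is $o(s^{-1/2})$.

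I will split the integral at a fixed $\eta\in(0,1]$.

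On the tail $|x|>\eta$, boundedness of $\phi$ gives $|\phi(x)-\phi(0)|\le 2\sup|\phi|$. The kernel there satisfies $(x^2+1)^{-(s+1)/2}\le (1+\eta^2)^{-(s-1)/2}(x^2+1)^{-1}$. Hence the tail contributes $O\bigl((1+\eta^2)^{-(s-1)/2}\bigr)$, which decays exponentially in $s$ and is therefore $o(s^{-1/2})$.

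On $|x|\le\eta$, continuous differentiability gives $|\phi(x)-\phi(0)|\le C_\eta|x|$, with $C_\eta=\max_{[-\eta,\eta]}|\phi'|$. It remains to bound $\int_{\R}|x|(x^2+1)^{-(s+1)/2}\,dx$, and this can be computed exactly:
\begin{equation*}
  2\int_0^\infty x(x^2+1)^{-(s+1)/2}\,dx \ \ = \ \ \frac{2}{s-1}\,.
\end{equation*}
This is $O(1/s)$, hence $o(s^{-1/2})$. Combining the two pieces proves the claim.

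The main point needing care is having a sharp enough scale for the kernel. The kernel concentrates at width $s^{-1/2}$ with total mass of order $s^{-1/2}$, while its first absolute moment is of order $s^{-1}$. That gap is exactly what the Lipschitz bound near $0$ uses. Mere continuity of $\phi$ would also suffice via an $\epsilon$-$\eta$ argument, since the normalized kernel $\sigma\cdot(x^2+1)^{-(s+1)/2}$ is an approximate identity: its mass outside $[-\eta,\eta]$ tends to $0$. The differentiability hypothesis simply makes the rate explicit, which is convenient for later uniformity in $k$ when $\phi(x)=f_\nu(\Phi_\beta(\cdot)k)$ as in (\ref{simpleintforbeta2}).
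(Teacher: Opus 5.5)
Your proof is correct and is essentially the paper's argument. Both use the exact identity $\int_{\R}(x^2+1)^{-(s+1)/2}\,dx = 1/\sigma(s,\xi_\text{triv})$ and split $\int_{\R}(x^2+1)^{-(s+1)/2}(\phi(x)-\phi(0))\,dx$ into a piece near $0$ and a tail, handling the piece near $0$ with the Lipschitz bound and the first moment $\frac{2}{s-1}$, and the tail with boundedness of $\phi$. The only difference is cosmetic: you split at $\eta\in(0,1]$ and get an exponentially small tail via $(1+\eta^2)^{-(s-1)/2}$, while the paper splits at $1$ with an $O(s^{-1})$ tail bound; either is $o(s^{-1/2})$, which is all that $\sigma(s,\xi_\text{triv})\sim\sqrt{s/(2\pi)}$ requires.
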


\noindent The intuition behind this lemma is that for large $s$, the integrand is mainly influenced by small $x$.  The exact rate of decay of contributions for nonzero $x$ is determined by $\sigma(s,\xi_\text{triv})$.  Such an analytic estimate on the integrand's contributions is crucial for the application to limiting structures on intertwining operators, and would be much more difficult in higher dimensions without the factorization of general intertwining operators into simple ones provided by (\ref{multrule}).
\begin{proof}
First, explicit calculation compared with  (\ref{cfunction}) shows that
\begin{equation}\label{evalintegral}
  \int_\R(x^2+1)^{-(s+1)/2}\,dx \ \ = \ \ \frac{1}{\sigma(s,\xi_\text{triv})} \,, \ \ \Re{s}>0.
\end{equation}
In order to show the assertion, bound the left-hand side of (\ref{asymptlittleo1}) for $s>0$ using
\begin{multline}\label{asymptlittleo2}
  \left| \int_{\R}(x^2+1)^{-(s+1)/2}\,(\phi(x)-\phi(0))\,dx \right| \ \le   \\   \int_{|x|< 1}(x^2+1)^{-(s+1)/2}\,|\phi(x)-\phi(0)|\,dx \ + \ \int_{|x|\ge 1}(x^2+1)^{-(s+1)/2}\,|\phi(x)-\phi(0)|\,dx\,.
\end{multline}
Since $\phi(x)-\phi(0)=\int_0^x \phi'(t)dt$, the first integrand is bounded by a multiple $|x|(x^2+1)^{-(s+1)/2}$, whose integral over the entire real line is $\f{2}{s-1}$.  The second integrand is $O(x^{-s-1})$, and therefore its integral is also $O(s\i)$, which is  $o(\sigma(s,\xi_\text{triv}))$ by (\ref{sterling}).
\end{proof}

As a consequence of the Lemma and (\ref{simpleintforbeta1}), for any flat section $f_\nu$ in (\ref{twoinds}) one has the large real $\langle \nu,\beta^\vee\rangle$-asymptotics 
\begin{equation}\label{lemmaconsequenceM}
  [M(w_\b,\nu,\d)f_\nu](e) \ \ = \  \ \frac{1}{\sigma(\langle \nu,\b^\vee\rangle,\xi_\text{triv})}\,f_\bullet(w_\beta\i) \ + \  o\(\frac{1}{\sigma(s,\xi_\text{triv})}\),
\end{equation}
where $f_\bullet (w_\beta\i)$ denotes the common value of the $f_\nu(w_\beta\i)$.
Note that the denominator in (\ref{lemmaconsequenceM}) is independent of $\delta$. The normalized intertwining operators therefore have the large-$\langle \nu,\beta^\vee\rangle$ limits
\begin{equation}\label{lemmaconsequenceR}
 \lim_{  \langle \nu,\beta^\vee\rangle\rightarrow\infty} [R(w_\b,\nu,\d)f_\nu](e) \ \ = \  \
\left\{
  \begin{array}{ll}
    f(w_\beta\i), & \d\circ \beta^\vee\text{~trivial}, \\
    if(w_\beta\i), & \hbox{otherwise}
  \end{array}
\right.
\end{equation}
because of (\ref{sterling}).
This localization of intertwining operators at $\infty$ is the core analytic ingredient in this paper.  As an example, for $SL(2,\R)$ it shows that the limits for $f_\nu$ coming from the characters (\ref{Kisotypic}) are explicit roots of unity; applied to the case of $n=2$, where the only singularities are at $\nu=\pm \rho$, we recover the famous fact that the spherical complementary series  $\nu=t\rho$ ends at  $t=1$.

\section{Proof of the  Limiting~\thmref{thm:limit}}\label{sec:limittheorem}

Let $\Lambda$ denote the left translation operator on functions in $C^\infty(K)$.  As it does for any $w\in W$, the composition rule (\ref{multrule}) allows us to write $R(w,\nu,\delta)$ for $w=w_\text{long}$ as
\begin{equation}\label{comprulelong}
\aligned
  R(w_{\text{long}},\nu,\delta) \ \ & = \ \ R_1(\nu,\delta)\circ R_2(\nu,\delta)\circ\cdots\circ R_{\ell}(\nu,\delta)\,,\\
\text{where} \ \ R_j(\nu,\delta) \ \ & := \ \ R(w_{\b_j},w_{\b_{j+1}}\cdots w_{\b_\ell}\nu,w_{\b_{j+1}}\cdots w_{\b_\ell} \d)
\,,
\endaligned
\end{equation}
for any reduced factorization $w_{\beta_1}w_{\beta_2}\cdots w_{\b_\ell}$ of $w$, $\ell=\ell(w)$.  It is manifest from  this and (\ref{simpleintforbeta2}) that  $R_j(\nu,\delta)$'s $\nu$-dependence depends only on the value of
\begin{equation}\label{factorsthrough}
\langle w_{\b_{j+1}}\cdots w_{\b_\ell}\nu,\beta_j^\vee\rangle \ \ = \ \
\langle \nu,w_{\b_\ell}\cdots w_{\b_{j+1}} \beta_j^\vee\rangle\,,
\end{equation}
where $w_{\b_\ell}\cdots w_{\b_{j+1}} \beta_j^\vee$ is a positive coroot.
It follows from (\ref{lemmaconsequenceR}) that
\begin{equation}\label{vanishingimplication}
\aligned
 \langle \varpi_\beta,w_{\b_\ell}\cdots w_{\b_{j+1}} \beta_j^\vee \rangle = 0 \ \ & \Longrightarrow \ \
R_j(\nu+s\varpi_\beta,\d)=R_j(\nu,\d) \text{~is constant in $s$,}\\
 \langle \varpi_\beta,w_{\b_\ell}\cdots w_{\b_{j+1}} \beta_j^\vee \rangle > 0 \ \ & \Longrightarrow \ \
 \lim_{s\rightarrow\infty} R_j(\nu+s\varpi_\beta, \d) = \epsilon_{w_{\beta_{j+1}}\cdots w_{\b_\ell}\d,\beta_j} \,\Lambda(w_{\beta_j}),
\endaligned
\end{equation}
where
\begin{equation}\label{epsilonfactor}
  \e_{\xi,\a} \ \ = \ \ \left\{
                                \begin{array}{ll}
                                  1, & \xi\circ\a^\vee\ \text{trivial,} \\
                                  i, & \hbox{otherwise,}
                                \end{array}
                              \right.
\end{equation}
takes into account  the normalization difference between the two cases of (\ref{lemmaconsequenceR}).

We now apply the decomposition (\ref{Kostant}).  Let $\ell'=\ell(w')$ and $\ell''=\ell(w_{\text{long},\beta})$,  writing $w'=w_{\beta_1}\cdots w_{\beta_\ell'}$ and $w_{\text{long},\beta}=w_{\beta_{\ell'+1}}\cdots w_{\beta_\ell}$ as reduced products of simple reflections.  As remarked after (\ref{Kostant}), $w'w_{\text{long},\beta}=w_{\beta_1}\cdots w_{\beta_\ell}$ is a reduced word for $w_{\text{long}}$.
 Then for all $j>\ell'$, $w_{\b_\ell}\cdots w_{\b_{j+1}}\beta_j^\vee$ is a positive coroot for $L_\beta$, which is the first case in (\ref{vanishingimplication}).  Moreover, each of the $\ell''$ positive coroots for $L_\beta$ occurs this way.  At the same time, the coroots $w_{\b_\ell}\cdots w_{\b_j+1}\b_j^\vee$ for $1\le j \le \ell=\ell'+\ell''$ exhaust all of the positive coroots for $G$.  It follows that if $1\le j \le \ell'$, the second case of (\ref{vanishingimplication}) must hold.

 Assume now that $\langle \nu,\beta^\vee\rangle =0$ as in the statement of~\thmref{thm:limit}.  It follows that
\begin{equation}\label{limitwithLambda}
\aligned
\lim_{s\rightarrow\infty} R(w_{\text{long}},\nu+s\varpi_\beta,\delta)  \ & = \  \e\,
\Lambda(w_{\beta_1}) \!\circ \!\cdots\! \circ \!\Lambda(w_{\beta_{\ell'}})\!\circ\!
R_{\ell'+1}(\nu,\d)\!\circ\! \cdots \circ R_{\ell}(\nu,\d) \\
& = \  \e\,\Lambda(w_{\beta_1}\cdots w_{\beta_{\ell'}})\circ R(w_{\text{long},\b},\nu,\d)\\
& =  \ \e\,\Lambda(w')\circ R(w_{\text{long},\b},\nu,\d)\,,
\endaligned
\end{equation}
where $\e=\prod_{j=1}^{\ell'} \epsilon_{w_{\beta_{j+1}}\cdots w_{\b_\ell}\d,\beta_j}$.
The Theorem now follows. \bx

\section{Proving the FPP inequality}\label{sec:FPPproof}

Barbasch \cite{petite} introduced the notion of {\it petite} $K$-types in his study of the spherical unitary dual for split classical groups:~irreducible representations $(\tau,U)$  of $K$ for which the spectrum of $d\tau(iZ_\beta)$ on $U$ is contained in $\Z\cap[-2,2]$.  This notion has been considered as well in the non-spherical situation \cite{jeringu,annegret}.
  The spectral bound simplifies the factors of $r(s,n)$ in (\ref{ras}):
\begin{equation}\label{smallras}
 r(s,0) \ = \ r(s,-1) \ = \ -r(s,1) \ = \ 1,  \text{~~and~}r(s,\pm 2) \ = \ \frac{1-s}{1+s}\,.
\end{equation}
 Their most important feature here is that
\begin{equation}\label{rasdoesntchangesign}
  \text{ for $-2\le n \le 2$,~} r(s,n) \ \text{has no singularities or sign changes in }1<s<\infty,
\end{equation}
and that this common sign is given by the limiting formula (\ref{rasymptotics}).
Since the restriction of the Hermitian form $R(w,\nu,\delta)$ to a $K$-type depends continuously on $\nu$, these properties show that the signature restricted to a petite $K$-type is constant on a   large region in $\nu$, and can be computed from limiting values (where the expressions from the last two sections are simpler).

More specifically, the only possible singularities of $R(w_{\text{long}},\nu,\delta)$  on petite $K$-types for   dominant $\nu$ are on hyperplanes of the form $\langle \nu,\alpha^\vee\rangle =1$ for some $\alpha \in \Delta_+$.  Fix $\beta\in\Sigma$.  Suppose that $\nu$ is written as a linear combination of fundamental weights $\sum c_\varpi \varpi$, with the coefficient $z:=c_{\varpi_\beta}=\langle \nu,\beta^\vee\rangle$ of the fundamental weight $\varpi_\beta$ dual to $\beta^\vee$ allowed to vary, but all other $c_\varpi$ fixed.  Let $\gamma^\vee$ be an arbitrary positive coroot, expressed as an integral combination $\sum_{\alpha\in\Sigma}c_\a \a^\vee$ of  positive simple coroots.  Then
\begin{equation}\label{pairingwithmovingnu}
  \langle \nu,\gamma^\vee \rangle  \ \ = \ \ z\, c_\beta \ + \ \left\langle \sum_{\varpi\neq\varpi_\b}c_\varpi \varpi ,\gamma^\vee- c_\b \b^\vee \right\rangle
\end{equation}
is an affine function in $z=\langle \nu,\beta^\vee\rangle$.  The second term on the right-hand side is nonnegative for $\nu$ dominant, as is $c_\beta$.  Let us now consider a deformation of $z$ over the interval $(1,\infty)$.  If $c_\beta=0$, then (\ref{pairingwithmovingnu}) remains constant.  Otherwise, the integer $c_\beta$ must be at least 1, and (\ref{pairingwithmovingnu}) is always strictly greater than 1 for any $1<z<\infty$.  Using (\ref{rasdoesntchangesign}) we conclude that the signature at $\nu$ of this form on a petite $K$-type is unchanged as $z$ grows larger and larger.  Recall from \thmref{thm:limit} that the limiting value of $R(w,\nu,\delta)$ exists in the large-$z$ limit.  If this $z\rightarrow \infty$ limit is indefinite, then $V_{\nu,\delta}$ cannot be unitary if $\nu$ is dominant and $z=\langle \nu,\beta^\vee\rangle>1$.  Furthermore, in situations such as ours where  the principal series has a unique lowest $K$-type, that indefiniteness must persist to the quotient of $V_{\nu,\delta}$.

To prove Theorem~\ref{mainthm}, it remains to show that for each character $\d$ and each simple root $\beta$, there exists a petite $K$-type $\tau$ on which the limiting value of $R(w_\text{long},\nu,\delta)$ in the $z=\langle \nu,\beta^\vee\rangle\rightarrow \infty$ limit is indefinite.  This is adequate for showing non-unitarity, since the Hermitian form  is known to be positive definite on the unique lowest $K$-type, as can be checked by directly computing  $R(w_\text{long},0,\delta)$.

The verification of the limiting indefiniteness is accomplished by a case-by-case argument, in which it is shown that the limiting value of $R(w_\text{long},\nu,\delta)$ has an explicit eigenvector with eigenvalue $-1$.  We successfully carried out this strategy for all split real forms of exceptional groups, and  present details for $E_{8(8)}$ below.  We note that the proof in \cite{DM-B} also proves non-unitarity by finding an eigenvector with eigenvalue $-1$, though it is unclear if there is  any connection with the present argument.

\subsection{Implementation issues}

Our computations use explicit models of finite-dimensional representations of $K$, especially in terms of a matrix realization of the Lie group $G$ in which the $M$-action can be explicitly calculated.  We used the matrix realizations of exceptional groups $G$ due to Ross Lawther, which were publicly made available in \cite{Hundley-Miller}.

On the other hand, explicit models of arbitrary  representations of $K$  were not available in the literature in a useful form.   Zhuohui Zhang recently found
a beautiful construction using Gelfand-Tsetlin bases, and graciously made his code \cite{Zhangrepo} available to us;  we were unable to scale up other existing approaches to adequately perform our necessary calculations.
  Deriving efficiently computable large-dimensional models for $K$-types is nontrivial and delicate.  However, it is important to note that the application here only requires verification of the correctness of the model representation (but not its derivation), which is straightforward to check using generators and relations.
For this reason we will instead focus on the implementation aspects specific to computing intertwining operators.

Symbolic, exact calculation becomes cumbersome in high dimensions (our most useful $K$-type has dimension 1920).  To circumvent this bottleneck we instead used numerical methods in such a way that the results could then be rigorously checked symbolically.
  First, we used numerical eigenvalue packages as a tool to quickly approximate the spectrum of the $Z_\alpha$-action, which on any particular $K$-type is a matrix action.  Though the eigenvalues are integers, symbolic computation is prohibitive.  This serves as a useful check.  It is then a much simpler matter to rigorously and symbolically diagonalize  this action for each rounded eigenvalue, thereby verifying the numerical findings by checking that  dimensions of the eigenspaces add up to the total dimension of the representation space.

Second, it is difficult to symbolically compute the change-of-basis matrices between the diagonalizations of the $Z_\beta$-actions for different $\beta$.  Computing numerical approximations, on the other hand, is very easy, but not rigorous.  We blended the advantages of both approaches as follows.  Making an ansatz that the entries of the change-of-basis matrices should be algebraic numbers, we applied the LLL algorithm   to find excellent algebraic number approximations to the numerically-calculated values of the entries of the matrices.  With such a candidate guess  for these exact, symbolic values in hand, we then performed the computationally simpler task of verifying the algebraic approximations to the change-of-basis matrices were in fact correct.

Finally, speed issues were circumvented in most of the calculations by finding an appropriate conjugate of Zhang's representations for which most calculations could be performed using integer arithmetic, rather than relying on symbolic computation with algebraic numbers.

\subsection{Particular details for $E_{8(8)}$}

Recall that the maximal compact subgroup $K$ of $E_{8(8)}$ is a quotient of $Spin(16)$ by an element of order 2 (but  is not $SO(16)$).  The lowest-dimensional $K$-types are listed in Table~\ref{E8table}, all of which are petite.  Recall the Cartan decomposition ${\frak g}={\frak k}\oplus {\frak p}$, where $\frak k=Lie(K)$ and
\begin{equation}\label{frakpdef}
  \frak p \ \ = \ \ (\frak h \otimes \C)\,\oplus\,\bigoplus_{\a\in\Delta_+}\C(X_\a+X_{-\a}).
\end{equation}
 The adjoint action of $K$ on $\frak g$ preserves both $\frak k$ and $\frak p$, which in this particular case gives natural, internal realizations of the 120- and 128-dimensional representations of $K$.

There are 256 possible principal series representations (\ref{princseries}), corresponding to the 256 possible characters $\delta$.  The intertwining operators  (\ref{standardint}) group them into three   families, with related values of $\nu$:
\begin{itemize}
  \item the spherical principal series $V_{\nu,\delta_{\text{triv}}}$;
  \item the 120 $V_{\nu,\delta}$ for which $\delta$ is in the Weyl orbit of the character $\delta_{\varpi_8}$ associated to the fundamental weight $\varpi_8$; and
  \item the remaining 135 in the Weyl orbit of the character $\delta_{\varpi_2}$ associated to $\varpi_2$.
\end{itemize}
 Each principal series has a unique lowest $K$-type (LKT) occurring with multiplicity 1, which for these three families respectively are:~the trivial representation, the 128-dimensional representation $\frak p$, and the 135-dimensional representation.  The sums of the multiplicities  across all 256 possible $V_{\nu,\delta}$ of a $K$-type $\tau$ must equal the dimension of $\tau$; from this we see that $\frak p$ must also occur with multiplicity  exactly 8 inside the spherical principal series (see~\secref{sec:spherical}), and that none of these LKTs otherwise appear in the other families.

\begin{table}
\begin{center}
\begin{tabular}{c|c|c}
Highest weight & dimension & comments \\
\hline
0 & 1 & trivial, LKT \\
$\varpi_2$ & 120 & $\frak k$, LKT\\
$\varpi_8$ & 128 & $\frak p$, half-spin, useful \\
$2\varpi_1$ & 135 & LKT \\
$\varpi_4$ & 1820 &  not useful\\
$\varpi_1+\varpi_7$ & 1920 & vector-spinor, very useful \\
\end{tabular}
\end{center}
\caption{The lowest-dimensional $K$-types occurring in principal series for $E_{8(8)}$, all of which listed here are petite.  The indexing for the highest weights $\varpi_7$ and $\varpi_8$ depends on particular choice of coordinates for ${\frak k}=Lie(K)$ inside ${\frak g}=Lie(G)$ (they might be switched in other coordinates, owing to $K$'s Dynkin diagram automorphism).  ``LKT'' signifies lowest $K$-types of some principal series representations, on which the value of $R(w_\text{long},\nu,\delta)$  can be thought of as a normalization (LKTs themselves are inadequate to prove non-unitarity).
\label{E8table}}
\end{table}

Barbasch proved the FPP inequality for spherical representations using the 128-dimensional $K$-type $\frak p$.  In the next section we give a proof of the spherical case as a direct application of \thmref{thm:limit}.
This  case of $\delta_{\text{triv}}$ trivial aside, there are $2040=(2^8-1)\times 8$ different pairs of nontrivial characters $\delta$ and perturbation directions $\beta$.  The FPP inequality for $E_{8(8)}$ will be proved by showing each of these 2040 limits of intertwining operators has a $-1$ eigenvalue on some petite $K$-type.

\vspace{.2cm}

{\bf Applications of $\frak p$.} Characters $\delta$ correspond to elements of $\Lambda_{\text{wt}}/2\Lambda_{\text{wt}}$, where $\Lambda_{\text{wt}}$ is the weight lattice. Suppose $\delta$ is one of the 120 characters in the Weyl orbit of $\d_{\varpi_8}$, so that the $K$-type $\frak p$ occurs with multiplicity one in $V_{\nu,\d}$.  Noting that $w_{\text{long}}$ acts as the identity on the characters of $M$ but as negation on  $\Lambda_{\text{wt}}$, write a weight corresponding to the character $\delta$ as $w\varpi_8$ for some $w\in W$,  replacing $w$ by $-w$ if necessary to ensure that $w\a_\text{high}\in \D_+$.  Then the computation of intertwining operators outlined above shows that  $R(w_\text{long},\nu,\delta)$ acts by the scalar $\frac{1-\langle \nu,w\a_\text{high}^\vee\rangle}{1+\langle \nu,w\a_\text{high}^\vee\rangle}$ on the $\frak p$-isotypic space of $V_{\nu,\delta}$, which occurs with multiplicity one.

Suppose that $\beta$ is a simple root for which $w\a_\text{high}^\vee$ has a nontrivial coefficient of $\beta^\vee$, so that  $\langle \nu,w\a_\text{high}^\vee\rangle$ grows to $\infty$ in the large-$\langle \nu,\beta^\vee\rangle$ limit.  It follows that $\frac{1-\langle \nu,w\a_\text{high}^\vee\rangle}{1+\langle \nu,w\a_\text{high}^\vee\rangle}\rightarrow -1$ in this limit as well.  In such a case, the full $\frak p$-isotypic space of flat sections is in the $-1$ eigenspace of the limiting Hermitian form.  This phenomenon happens for 715 of the $960=120\times8$ pairs of characters $\d$ in the Weyl orbit of $\d_{\varpi_8}$ and perturbation directions $\b$.

\begin{proof}[Proof of \thmref{thm:easyalcove}]
Take $w$ to be the identity element, so that the restriction of the Hermitian form $\frac{1-\langle \nu,\a_\text{high}^\vee\rangle}{1+\langle \nu,\a_\text{high}^\vee\rangle}$ to  the $\frak p$-isotypic subspace is negative on dominant $\nu$ for which condition (\ref{fundalcove}) fails.  This proves the non-unitarity assertion.

On the other hand, unitarily-induced principal series are always unitary for $\nu=0$.  Since principal series of $E_{8(8)}$ have a unique lowest $K$-type, a deformation of the principal series along the family $\nu=t\rho$ for small $t\ge 0$ does not encounter any reducibility points.  It follows that unitarity is preserved for such a small deformation, which in particular immediately enters the open fundamental alcove that is defined through the inequality (\ref{fundalcove}).  Unitarity on the full open  alcove follows, since the signature of the restriction of the Hermitian form to $K$-types is constant on alcove facets.  Finally, unitarity extends to the closure of any unitary alcove, proving the Theorem.
\end{proof}

{\bf Applications of the vector-spinor.}  The 1920-dimensional $K$-type is even more powerful for proving non-unitarity.  It occurs with multiplicity 7 for each of the 120 characters $\delta$ in the orbit of $\d_{\varpi_8}$, and with multiplicity 8 for the other 135 characters $\d$.  In all but one case (the limit for $\beta=\alpha_2$ and $\delta=\d_{\varpi_8}$), the limiting Hermitian form has a nontrivial $-1$ eigenspace.  This exceptional $\d$ is precisely the one covered in~\thmref{thm:easyalcove}.  The proof of Theorem~\ref{mainthm} is therefore complete, since the FPP contains the fundamental alcove (because the highest root has a nontrivial coefficient of any simple coroot).

\section{The spherical case}\label{sec:spherical}

In this section, we present an application of limiting  intertwining integrals to give a new proof of Barbasch's theorem that the FPP inequality (\ref{barbasch1})-(\ref{barbasch2}) holds for spherical representations, i.e., dominant quotients of $V_{\nu,\delta_\text{triv}}$.  The proof is general but for brevity we will write it in the case of $E_{8(8)}$, where it simplifies somewhat because of the absence of diagram automorphisms.  However, we begin with a more general result about the $K$-type $\frak p$ under the adjoint action of $K$.

\begin{lem}\label{lem:petite}
Let $G$ be a split form of type $A$, $D$, or $E$.  Then $\frak p$ is petite.
\end{lem}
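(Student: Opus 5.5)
The plan is to bound the spectrum of $\operatorname{ad}(Z_\beta)$ on all of $\frak g$, and then restrict to the $K$-stable subspace $\frak p$ of (\ref{frakpdef}). The $K$-type $\frak p$ carries the adjoint action of $K\subset G$, so its differential is $d\tau(Z_\beta)=\operatorname{ad}(Z_\beta)|_{\frak p}$. It therefore suffices to show that every eigenvalue of $i\,\operatorname{ad}(Z_\beta)$ on $\frak g_\C$ is an integer in $[-2,2]$, for each $\beta\in\Sigma$. In fact the argument will work for any root $\beta$.

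First, decompose $\frak g_\C$ under the root $\frak{sl}_2$-triple $\{X_\beta,X_{-\beta},H_\beta\}$, i.e.\ under $d\Phi_\beta$. The weights of $H_\beta$ on the root space $\frak g_\alpha$ are $\langle\alpha,\beta^\vee\rangle$, and the irreducible constituents are built from $\beta$-strings through roots $\alpha$, together with copies of the trivial representation inside $\frak h$. In the simply laced case (types $A$, $D$, $E$) all roots have equal length. Hence for $\alpha\neq\pm\beta$ one has $\langle\alpha,\beta^\vee\rangle\langle\beta,\alpha^\vee\rangle\le 1$, so $\langle\alpha,\beta^\vee\rangle\in\{-1,0,1\}$. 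This forces every $\beta$-string through such an $\alpha$ to have length at most $2$. Consequently the irreducible constituents have dimensions $1$, $2$, or $3$, the last being the triple itself (the adjoint representation of the $\frak{sl}_2$). Equivalently, the highest weights $m$ occurring satisfy $m\le 2$.

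Second, observe that $Z_\beta=X_\beta-X_{-\beta}$ is the image under $d\Phi_\beta$ of $\ttwo 01{-1}0$. This element is conjugate in $SL(2,\C)$ to $i\ttwo 100{-1}$ (its eigenvalues in the standard representation are $\pm i$). Therefore, on the irreducible representation of highest weight $m$, $Z_\beta$ acts with eigenvalues $i\{m,m-2,\dots,-m\}$. Combined with $m\le 2$, the spectrum of $i\,\operatorname{ad}(Z_\beta)$ on $\frak g_\C$ lies in $\{-2,-1,0,1,2\}$. Restricting to the invariant subspace $\frak p$ then gives the petite condition.

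The only step requiring care is the root-string bound. This is the standard fact that a $\beta$-string through $\alpha$ is $\alpha-p\beta,\dots,\alpha+q\beta$ with $p-q=\langle\alpha,\beta^\vee\rangle$, and has length at most $2$ when $\alpha\neq\pm\beta$ in a simply laced system. This is exactly where the type $A$/$D$/$E$ hypothesis enters: in the non-simply-laced cases strings of length $3$ or $4$ can occur, which would yield eigenvalues $\pm3$.
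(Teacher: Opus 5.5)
Your proof is correct and follows the paper's argument: conjugating $Z_\beta$ to $iH_\beta$ inside the root ${\frak sl}_2$ (the paper's Cayley transform) reduces the claim to bounding the $\operatorname{ad}H_\beta$-eigenvalues $\langle\alpha,\beta^\vee\rangle$ on $\frak g_\C$ by $2$. The paper obtains this bound from Cauchy--Schwarz for roots of equal length, which is the same simply-laced input you use through $\langle\alpha,\beta^\vee\rangle\langle\beta,\alpha^\vee\rangle\le 1$ and the root-string length; your ${\frak sl}_2$-decomposition is just a more detailed packaging of that step.
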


\noindent The assumption ensures that all roots have equal length, and lie in a common Weyl group orbit.  In fact $\frak p$ can fail to be petite for groups such as $G_{2(2)}$ which have multiple root lengths (though this  failure is not seen when studying spherical representations alone).

\begin{proof}
 Since $Z_\a$ and $iH_\a$ are related by a Cayley transform inside the complex root ${\frak sl}(2)$ for $\alpha$, the fact that $\frak p$ is petite  follows from the fact that $H_\a$'s spectrum under the adjoint action is bounded by 2.  Indeed, since the Weyl group acts transitively on both roots and coroots by an orthogonal transformation,  the bound follows from the Cauchy-Schwarz inequality.
\end{proof}

We now return to the setting of $G=E_{8(8)}$, and
identify $\frak p$ with its dual via the Killing form.
The discussion of matrix coefficients following (\ref{matrixcoefficients}) shows that
the flat sections in  (\ref{twoinds})  are spanned by  $F_{u',u}$ for which $M$ acts trivially on $u'$.  In our setting, those $u'$ correspond to elements $H\in\frak p$ which are invariant under the adjoint action of $M$.  Recall the decomposition (\ref{frakpdef}).  The $M$-invariance holds for any $H\in \frak h$, but not for any other nonzero linear combinations of the spanning elements $X_\a+X_{-\a}$, $\a\in \D_+$, because each $m\in M$ acts with sign $m^\alpha$ on $X_{\pm \a}$ -- a sign which cannot be trivial for all $m\in M$.   Thus the space of $M$-invariants can be precisely identified with $\frak h^*_\C$, whose dimension is indeed the rank of $G$ as   was pointed out in the last section.\\

With this background in mind, we turn to the proof itself.
Given $\beta\in\Sigma$, take $f_\nu$ to be any spherical flat section whose restriction to $K$ is the matrix coefficient $F_{u'_\beta,u}$,
where $u'_\beta$ corresponds to $\varpi_\beta$ and $u$ is arbitrary.
We will now argue that the limit in (\ref{limittheorem}) is $-f_\nu$.  First, $R(w_{\text{long},\beta},\nu,\delta)f_\nu=f_\nu$ because $f_\nu$ is left-invariant under $K\cap L'_\beta$, where $L'_\beta$ is the semisimple part   of $L_\beta$:~by
construction the normalized intertwining operator $R(w_{\text{long},\beta},\nu,\delta)$ defined in (\ref{normalizedintertwiners}) acts as the identity here, since its computation takes place on $L'_\beta$, on which $f_\nu$ is essentially spherical (see property 4 following (\ref{multrule})).

Recall that the constant $\varepsilon$ in (\ref{limittheorem}) is 1 when $\delta$ is trivial.  To finish the proof, we will show that $\Lambda(w_{\text{long}}(w_{\text{long},\beta})\i)$ sends $f_\nu$ to $-f_\nu$.  Note that $w_{\text{long},\beta}$ fixes $\varpi_\beta$, viewed as an element of $\frak h\cong \frak h^*$,  and $w_{\text{long}}$ acts as negation, since there are no nontrivial diagram automorphisms.
Thus under the adjoint action,
\begin{equation}\label{varpibetanegation}
 \tau'(w_\text{long} (w_{\text{long},\beta} )\i) u'_\beta \ \ = \ \ \tau'(w_{\text{long}}) u'_\beta  \ \ = \ \ -u'_\beta\,.
\end{equation}
Hence  $\Lambda(w_{\text{long}}(w_{\text{long},\beta})\i)f_\nu=-f_\nu$ and
it follows   that $f_\nu$ provides the requisite limiting $-1$ eigenvalue to prove the FPP inequality.  \bx

\end{document}